\newtheorem{theorem}{Theorem}
\newtheorem{lemma}[theorem]{Lemma}
\newtheorem{remark}{Remark}
\newcommand{\be}{\begin{equation}}
\newcommand{\ee}{\end{equation}}
\DeclareMathOperator{\arccosh}{arccosh}
\begin{document}

\title[On approximation of ultraspherical polynomials...]{On approximation of ultraspherical polynomials in the oscillatory region}

\author[I. Krasikov]{Ilia Krasikov}


\address{ Department of Mathematics, Brunel University London, Uxbridge
UB8 3PH United Kingdom} \email{mastiik@brunel.ac.uk}

\begin{abstract}
For $k \ge 2$ even, and $ \alpha \ge -(2k+1)/4 $, we provide a uniform approximation of the ultraspherical polynomials $ P_k^{(\alpha,\, \alpha)}(x) $ in the oscillatory region with a very explicit error term. In fact, our result covers all $\alpha$ for which the expression ``oscillatory region" makes sense. We show that there the function $g(x)={c \sqrt{b(x)} \, (1-x^2)^{(\alpha+1)/2} P_k^{(\alpha, \alpha)}(x)=\cos \mathcal{B}(x)+ r(x)}$, where $c=c(k, \alpha)$ is defined by the normalization, $\mathcal{B}(x)=\int_{0}^ x b(x) dx$, and the functions $c,\; b(x), \; \mathcal{B}(x)$, as well as bounds on the error term $r(x)$ are given by some rather simple elementary functions.
\end{abstract}

\maketitle

\noindent
\hspace{1ex}{\bf Keywords:}
orthogonal polynomials, ultraspherical polynomials, Gegenbauer polynomials, uniform approximation
\noindent
\footnote{ 2000 \emph{Mathematics Subject Classification}
33C45}

\section{Introduction}
The ultraspherical polynomials we deal with in this paper will be convenient to define in terms of Jacobi polynomials as $ P_k^{(\alpha,\, \alpha)}(x)$, where we choose the standard normalization for the last function. We will use the bold character ${\bf P}_k^{(\alpha,\, \alpha)}(x) $ to denote the orthonormal Jacobi polynomials.
Since we are going to consider the case $\alpha \le -1$ as well, let us notice that the right hand side of the formula
$$\big|\big|P_k^{(\alpha, \, \alpha)}\big|\big|^2_{L_2}=\int_{-1}^1 (1-x^2)^\alpha \left( P_k^{(\alpha,\, \alpha)}(x) \right)^2 dx = \dfrac{2^{2\alpha+1} \,\Gamma^2(k+\alpha+1)}{(2k+2\alpha+1)\, \Gamma^2(k+2\alpha+1)k!} \, ,$$
and therefore the orthonormal normalization, still make sense as far as $\alpha > - \dfrac{k+1}{2} \, $.

 Here we will establish a uniform approximation of the ultraspherical polynomials in the oscillatory region with an explicit error term for a vast range of the parameter $\alpha$; in fact, for all $\alpha$ for which the expression ``oscillatory region" makes sense.
  A few standard formulas we are using in the sequel may be found e.g. in \cite{szego}.

 There are a number of known asymptotics for the Jacobi polynomials under these or those restrictions on the parameters $\alpha$ and $\beta$, starting from the classical case $|\alpha |, |\beta| \le 1/2$ considered in Szeg\"{o}'s book \cite{szego}, or, for example, more recent results with asymptotically constant ratios of $\alpha/k$ and  $\beta/k$ (see e.g. \cite{KMF}, \cite{OSRZ} and references therein). However if one is interested in uniform bounds the situation becomes less studied, and we refer to the recent preprint \cite{KKT} and the references therein for a review of known results.

To simplify otherwise complicated expressions in the sequel we introduce the following new parameters:
\begin{equation}
\label{vu}
 u=(k+ \alpha)( k+\alpha+1), \; \; q=(\alpha^2-1)/u  ,
  \end{equation}
which turn out to be quite natural in this context.

 We start with the normal form of the differential equation for ultraspherical polynomials
 \begin{equation}
\label{maindifeq}
y''+b^2 y=0, \; \; \; y=(1-x^2)^{(\alpha+1)/2} P_k^{(\alpha, \, \alpha)}(x),
\end{equation}
where
\begin{equation}
\label{defb}
b=b(x)=\frac{\sqrt{(1-q-x^2)u}}{1-x^2} \, ,
\end{equation}
 and define the function
 \begin{equation}
\label{defg}
g(x)= \sqrt{b(x)} \, y(x) ,
\end{equation}
that, as we will show, almost  equioscillates in the interval $|x| <\sqrt{1-q} \, $.

Our main result is the following Theorem \ref{thmain1} that provides a uniform approximation of $g(x)$ for $k$ even in the oscillatory region with a very explicit error term.
The corresponding result for $k$ odd may be readily obtained from e.g. the three term recurrence. To simplify the statement of the theorem involving multivalued functions, without loss of generality we will restrict ourselves to the case $x \ge 0$.
\begin{theorem}
\label{thmain1} Let $k \ge 2$ be even and let $x$ belong to one of the following intervals depending on the value of $\alpha :$
\begin{enumerate}
\item[(\it{i})]
$ \; \;\; 0 \le x \le \sqrt{1-\dfrac{1}{u}} \, , \; \; \; \; | \alpha| < \sqrt{\dfrac{7}{6}} \, ; $
\item[(\it{ii})]
$ \; \;\; 0 \le x \le \sqrt{1-q} \, , \; \;\; \;  \alpha \in \big[ -\dfrac{2k+1}{4}\, , -\sqrt{\dfrac{7}{6}}\; \big] \, \bigcup \, \big[ \,\sqrt{\dfrac{7}{6}} \, , \infty \, \big).$
\end{enumerate}
Then the following approximation holds:
\begin{equation}
\label{eqg}
g(x)=g(0) \big( \cos \mathcal{B}(x)+r(x) \big),
\end{equation}
where
\begin{equation}
\label{urg0}
g(0)=
 \left( - \frac{1}{4} \right)^{k/2} {k+\alpha \choose k/2} \left( k^2+2k \alpha+k+\alpha+1 \right)^{1/4};
\end{equation}
\begin{equation}
\label{urbb}
\mathcal{B}(x) =
\end{equation}
 $$\left\{
\begin{array}{cc}
 \sqrt{u} \left( \arccos \sqrt{\dfrac{1-q-x^2}{1-q}}-\sqrt{q} \, \arccos \sqrt{\dfrac{1-q-x^2}{(1-q)(1-x^2)}} \; \,\right),  & q \ge 0;\\
 &\\
 \sqrt{u} \left( \arccos \sqrt{\dfrac{1-q-x^2}{1-q}}+\sqrt{-q} \,\arccosh \, \sqrt{\dfrac{1-q-x^2}{(1-q)(1-x^2)}} \;\, \right), & q < 0.
 \end{array}
 \right.$$

The error term $r(x)$ is bounded as follows:
\begin{equation}
\label{urrx}
 |r(x)| < \left\{
\begin{array}{cc}
\dfrac{2.72 \, x}{\sqrt{(1-x^2)u}} \, , & q < 0 ;\\
   &\\
 \dfrac{2(1-x^2)x}{(1-q)(1-q-x^2)^{3/2} \, \sqrt{u}}\, , &  0 \le q <\frac{1}{2} \, ;\\
&\\
\dfrac{(1+q)x}{4\,(1 - q - x^2)^{3/2}\sqrt{u}} \, ,&\frac{1}{2} \le q <1 .
\end{array}
\right.
\end{equation}

\end{theorem}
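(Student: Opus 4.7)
The plan is a Liouville--Green (WKB) reduction of the normal form \eqref{maindifeq}. Introduce the phase variable $s=\mathcal{B}(x)=\int_0^x b(t)\,dt$ and view $g=\sqrt{b}\,y$ as a function of $s$; a direct computation (differentiating $b^{-1/2}w$ twice and substituting into \eqref{maindifeq}) converts it into the perturbed harmonic oscillator
\[
\ddot g + g = -\phi\, g, \qquad \phi(x) = \frac{3(b')^2-2bb''}{4b^{4}},
\]
where dots denote $d/ds$. This step identifies $\cos \mathcal{B}(x)$ as the leading behaviour and reduces the theorem to controlling the error induced by $\phi$.

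For the initial conditions, observe that $k$ even makes $y(x)=(1-x^2)^{(\alpha+1)/2}P_k^{(\alpha,\alpha)}(x)$ an even function, while $b^2=u(1-q-x^2)/(1-x^2)^2$ is also even in $x$, so $y'(0)=0$ and $b'(0)=0$; hence $\dot g(0)=0$. Evaluating $b(0)=\sqrt{u(1-q)}=\sqrt{k^2+2k\alpha+k+\alpha+1}$ together with the classical identity $P_k^{(\alpha,\alpha)}(0)=(-1/4)^{k/2}\binom{k+\alpha}{k/2}$ for even $k$ yields \eqref{urg0}. The explicit phase \eqref{urbb} is produced by the substitution $t=\sqrt{1-q}\sin\theta$ (and the hyperbolic variant when $q<0$), under which
\[
\int_0^x\frac{\sqrt{1-q-t^2}}{1-t^2}\,dt = \theta - q\int_0^\theta\frac{d\vartheta}{1-(1-q)\sin^2\vartheta};
\]
the remaining integral is elementary (substitute $v=\tan\vartheta$) and evaluates to $\tfrac{1}{\sqrt q}\arctan(\sqrt q\tan\theta)$. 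Back-substitution, combined with the identities $\arcsin(x/\sqrt{1-q})=\arccos\sqrt{(1-q-x^2)/(1-q)}$ and $\arctan v=\arccos(1/\sqrt{1+v^2})$, converts the antiderivative into the $\arccos$ form displayed in \eqref{urbb}; the $q<0$ branch follows identically, with $\arccosh$ replacing $\arccos$.

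The approximation \eqref{eqg} itself then comes from variation of parameters:
\[
g(s) = g(0)\cos s - \int_0^s\sin(s-\tau)\,\phi(x(\tau))\,g(\tau)\,d\tau,
\]
so that $r(x)=g(x)/g(0)-\cos\mathcal{B}(x)$ satisfies a Volterra inequality whose right-hand side is bounded by $(1+\sup|r|)\cdot\int_0^x|\phi(t)|\,b(t)\,dt$; a one-step bootstrap then converts this into a pointwise bound on $|r|$. The main obstacle is producing the sharp piecewise estimates \eqref{urrx}: writing $b'$ and $b''$ as rational functions of $x$ with coefficients in $u,q$ yields an explicit closed form for $\phi$ whose dominant singularity near the turning point is of order $(1-q-x^2)^{-3}$, and the three ranges of $q$ in \eqref{urrx} reflect three different distributions of powers of $1-q-x^2$, $1-x^2$ and $1-q$ in the integrand $|\phi|\,b$. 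The boundary $|\alpha|=\sqrt{7/6}$, the lower bound $\alpha\ge-(2k+1)/4$, and the truncation of the interval in case (i) to $x\le\sqrt{1-1/u}$ are exactly the thresholds that keep these explicit bounds finite and uniform on the stated intervals, so the delicate point is not the WKB reduction but the absolute constants in the three cases of \eqref{urrx}.
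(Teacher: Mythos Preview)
Your WKB reduction, computation of $g(0)$ and $\mathcal{B}(x)$, and the variation-of-parameters formula are exactly what the paper does (the paper's $\varepsilon$ is your $\phi$). The gap is in how you control the error.

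From
\[
r(x)=-\frac{1}{g(0)}\int_0^x \sin\bigl(\mathcal{B}(x)-\mathcal{B}(t)\bigr)\,\phi(t)\,b(t)\,g(t)\,dt
\]
you bound $|g|/|g(0)|=|\cos\mathcal{B}+r|\le 1+\sup|r|$ and then propose a ``one-step bootstrap''. That only yields $\sup|r|\le I(x)/(1-I(x))$ with $I(x)=\int_0^x|\phi|\,b$, and so is useless once $I(x)\ge 1$. But the bounds \eqref{urrx} are precisely of the form $\mu\cdot I(x)$ with $\mu\in\{1,2,2.72\}$, and they are stated all the way up to the turning point $x=\sqrt{1-q}$ in case~(ii), where $(1-q-x^2)^{-3/2}$ --- and hence $I(x)$ --- blows up. Your Volterra/Gr\"onwall argument cannot reach that region and cannot produce the specific constants.

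What the paper does instead is obtain an \emph{a~priori} uniform bound $\mu=\sup|g/g(0)|$ independent of $r$, via the Sonin envelope
\[
S(x)=g^2+\frac{g'^2}{(1+\varepsilon)b^2},\qquad S'=\frac{8b\,\mathbf A_2}{\mathbf A_1^2}\,g'^2,
\]
and a monotonicity argument giving $S(x)\le\dfrac{1+\varepsilon(0)}{1+\varepsilon(x)}\,g^2(0)$ on the interval where $\mathbf A_1>0$ and $\mathbf A_2>0$. The thresholds $|\alpha|=\sqrt{7/6}$ and the truncation $x\le\sqrt{1-1/u}$ in case~(i) do \emph{not} come from the WKB integrand, as you suggest; they are exactly the conditions needed for $\mathbf A_1>0$ (so that $g^2\le S$) and for the maximum of $S$ to lie inside the prescribed interval. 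Maximising $\dfrac{1+\varepsilon(0)}{1+\varepsilon(x)}$ over the relevant range gives the three values of $\mu$, and combining them with separate elementary estimates of $I(x)$ in the three $q$-ranges yields \eqref{urrx}. Without the Sonin step you have no mechanism to bound $|g/g(0)|$ near the turning point, so the proof as outlined is incomplete.
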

As a corollary we deduce that the ultraspherical polynomials (in a sense also for $\alpha \le -1$), live in the interval $\;(\,-\sqrt{1-q} \,,\,\sqrt{1-q} \;)$; more precisely
\begin{theorem}
\label{sharp} For $k \ge 10$ even,  $\alpha \ge 3-\dfrac{k}{2} \, , \; \; |\alpha| \ge 1$,
$$\int_{-\eta}^{\eta} (1-x^2)^\alpha \left( {\bf P}_k^{(\alpha, \alpha)} (x) \right)^2 dx  >1- \frac{5}{3(1-q)^{1/3} u^{1/6}} > 1-\frac{5}{3} \cdot \left( \dfrac{k+\alpha}{(k+2\alpha)k}\right)^{1/3},$$
where
$$\eta =\sqrt{1-q} \, \left(1-\dfrac{4 \cdot 2^{1/3}}{3(1-q)^{2/3} u^{1/3}} \, \right).$$
\end{theorem}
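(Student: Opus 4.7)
The plan is to use orthonormality and the symmetry afforded by $k$ being even to reduce the claim to a direct lower bound on $\int_0^\eta (1-x^2)^\alpha(P_k^{(\alpha,\alpha)})^2\,dx$, keeping the analysis entirely inside the oscillatory region where Theorem \ref{thmain1} is valid and thereby avoiding any need to handle the behaviour at or past the turning point $x=\sqrt{1-q}$. Specifically,
$$\int_{-\eta}^{\eta}(1-x^2)^\alpha({\bf P}_k^{(\alpha,\alpha)})^2\,dx = \frac{2}{\|P_k^{(\alpha,\alpha)}\|_{L_2}^2}\int_0^\eta (1-x^2)^\alpha(P_k^{(\alpha,\alpha)})^2\,dx.$$

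By \eqref{maindifeq}--\eqref{defg} the integrand equals $g(x)^2/\sqrt{u(1-q-x^2)}$, and the approximation of Theorem \ref{thmain1} together with $(\cos\mathcal{B}+r)^2 \ge \cos^2\mathcal{B}-2|r|$ gives the pointwise lower bound $g(x)^2 \ge g(0)^2(\cos^2\mathcal{B}(x)-2|r(x)|)$. Writing $\cos^2\mathcal{B} = (1+\cos(2\mathcal{B}))/2$, substituting $x = \sqrt{1-q}\sin\theta$, and integrating the $\cos(2\mathcal{B})$ part by parts via $\mathcal{B}'(x) = b(x)$, the $\tfrac{1}{2}$-part produces the main contribution
$$\frac{g(0)^2}{2\sqrt{u}}\arcsin\!\left(\frac{\eta}{\sqrt{1-q}}\right) = \frac{g(0)^2}{2\sqrt{u}}\Bigl(\frac{\pi}{2}-\arccos(1-\delta)\Bigr),\qquad \delta = \frac{4\cdot 2^{1/3}}{3(1-q)^{2/3}u^{1/3}},$$
with $\arccos(1-\delta) = \sqrt{2\delta}+O(\delta^{3/2})$. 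The prefactor $g(0)^2/(\|P_k^{(\alpha,\alpha)}\|_{L_2}^2\sqrt{u})$, computed from \eqref{urg0} and the $L_2$-norm formula of the introduction via Legendre's duplication formula applied to $\Gamma(k+2\alpha+1)$ and $k!$, simplifies to
$$\frac{(2k+2\alpha+1)\sqrt{1-q}}{\pi\sqrt{u}}\cdot\frac{\Gamma(k/2+1/2)\Gamma(k/2+\alpha+1/2)}{\Gamma(k/2+1)\Gamma(k/2+\alpha+1)} = \frac{2}{\pi}+O(k^{-1})$$
by Stirling. Combining, the leading "missing mass" is $\frac{2\sqrt{2\delta}}{\pi} = \frac{2^{8/3}}{\pi\sqrt{3}(1-q)^{1/3}u^{1/6}}\approx\frac{1.17}{(1-q)^{1/3}u^{1/6}}$.

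The main obstacle is to show that the collected errors stay below the residual $\frac{5/3-2^{8/3}/(\pi\sqrt{3})}{(1-q)^{1/3}u^{1/6}}\approx\frac{0.50}{(1-q)^{1/3}u^{1/6}}$. Four sources must be controlled: the $-2|r(x)|$ contribution, bounded by \eqref{urrx} which keeps $|r|$ a small explicit constant throughout $[0,\eta]$ (because the denominators $(1-q-x^2)^{3/2}$ stay comparable to their value at $x=\eta$); the boundary term of the $\cos(2\mathcal{B})$ integration by parts at $x=\eta$, which is of the same order $u^{-1/6}(1-q)^{-1/3}$ but is controlled using $|\sin 2\mathcal{B}(\eta)|\le 1$ (the corresponding term at $x=0$ vanishes since $\mathcal{B}(0)=0$); the $O(\delta^{3/2})$ remainder in the $\arccos$ expansion; and the $O(k^{-1})$ Stirling remainder, for which the hypotheses $k\ge 10$, $|\alpha|\ge 1$, $\alpha\ge 3-k/2$ are exactly what yields sufficiently small explicit bounds. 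Summing these contributions and checking that the total is no larger than the $0.50$ budget is the core of the argument.
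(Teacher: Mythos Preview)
Your approach is essentially the paper's own: rewrite the integrand via $g$, use Theorem~\ref{thmain1} to get $g^2\ge g(0)^2(\cos^2\mathcal{B}-2|r|)$, split $\cos^2\mathcal{B}=\tfrac12+\tfrac12\cos2\mathcal{B}$, integrate the oscillatory part by parts using $\mathcal{B}'=b$, bound the $|r|$-integral by \eqref{urrx}, and then compare $g(0)^2/(\|P_k^{(\alpha,\alpha)}\|_{L_2}^2\sqrt u)$ with $2/\pi$ via gamma asymptotics (the paper replaces your Stirling/Legendre step by the explicit inequality~\eqref{binom}, but the effect is the same).

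One caution on the bookkeeping: your list of four error sources omits the integral that \emph{remains} after the integration by parts, namely
\[
\frac{q}{2\sqrt u}\int_0^\eta\frac{x\sin 2\mathcal{B}(x)}{(1-q-x^2)^2}\,dx,
\]
which is of the same order $u^{-1/6}(1-q)^{-1/3}$ as the boundary term and cannot be dropped. Also, bounding the $|r|$-contribution only by ``$|r|\le$ small constant on $[0,\eta]$'' is too crude; one has to integrate the explicit $(1-q-x^2)^{-3/2}$ bound against $(1-q-x^2)^{-1/2}$, which yields a term of size $\sim ( (1-q-\eta^2)\sqrt u)^{-1}$. In the paper these two items together with the boundary term are packaged as a single $\tfrac{2}{(1-q-\eta^2)\sqrt u}$, and the specific choice of $\delta$ balances this against the $\arccos$ loss. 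When you add these correctly the non-leading contributions come to roughly the full $0.50$ allowance rather than comfortably under it, so the constant $5/3$ is only just met; there is no slack for rough estimates at any step.
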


Let us make a few comments about the theorems.

\begin{remark}
In fact, the first formula for $\mathcal{B}(x)$ holds also for $q <0$, provided one chooses the principal branches of the square roots and arccosines.
The expression for $\mathcal{B}(x)$ can be slightly simplified by the substitution $x=\sqrt{1-q} \, \sin \phi$, yielding
$$\mathcal{B}(x) = \sqrt{u} \, \left( \phi-\sqrt{q} \, \arccos \dfrac{1}{\sqrt{1+q \tan^2 \phi}} \right), \; \; \; 0 \le \phi <\frac{\pi}{2} \,.$$
\end{remark}
\begin{remark} For $k \ge 2$ and $\alpha \ge -(2k+1)/4$ the parameter $q$ satisfies the inequalities
\begin{equation}
\label{minq}
4 \sqrt{6}-10 \le - \frac{2}{k^2+k-1+\sqrt{(k-1)k(k+1)(k+2)}} \le q < 1 .
\end{equation}
Let us also note that $q <0$ corresponds to $|\alpha|<1$, and $q=\frac{1}{2}$  to
${\alpha \approx (1 \pm \sqrt{2})(k+\frac{1}{2} )}$.
\end{remark}
\begin{remark}
For orthonormal Jacobi polynomials Stirling's approximation gives
 $$g(0) \approx (-1)^{k/2} \sqrt{\frac{2k+2\alpha+1}{\pi}}  \,.$$
\end{remark}

\begin{remark}
For $\alpha \ge \sqrt{7/6} \,$ the interval $|x| < \sqrt{1-q}$ is large enough to include all the zeros of  $ P_k^{(\alpha,\, \alpha)}(x) .$  We will show that even at the extreme zeros the error term is still of order $O(1)$ (see Remark \ref{clxmax} below). It seems not too much is known about the zeros of $P_k^{(\alpha , \alpha)} (x)$ for $\alpha <-1$ (see however \cite{DD}, \cite{DM}). Nevertheless, for negative $\alpha$ Theorem \ref{thmain1} covers practically the whole oscillatory region inside the interval $[-1,1]$ besides maybe extreme zeros. Indeed, it's easy to see that for a continuous function $a(x)$, a nontrivial solution $f(x)$ of the differential equation $f''+a f=0$ may have at most one zero in each interval where $a(x)<0$.
For $\alpha=-(2k+1)/4$ the length of interval $|x| < \sqrt{1-q}\,$ shrinks to approximately $\sqrt{3}/k$, however
 for $\alpha \le -k/2$ the corresponding ultraspherical polynomial has no zeros in the interval $|x|<1$ (see \cite[sec. 6.72 ]{szego}).
\end{remark}


\section{The main term}
We will use a version of WKB method presented in \cite{krasbes1}. The derivation of the approximation (\ref{eqg}) is quite straightforward, and some rather technical work is needed to
estimate the error term $r(x)$ only.

The function $g(x)$ satisfies the following differential equation
\begin{equation}
\label{difg}
g''-\frac{b'}{b} g' +(1+\varepsilon)b^2 g=0,
\end{equation}
where
\begin{equation}
\label{defepsilon}
\varepsilon=\varepsilon (x)= \frac{3b'^2-2b b''}{4b^4} =- \frac{x^6+6q x^4-3x^2+4q^2-6q+2}{4u(1-q-x^2)^3}\, .
\end{equation}
Solving this equation as inhomogeneous one with the right hand side $-\varepsilon b^2 g ,$ we obtain
\begin{equation}
\label{resh}
g(x)=M \cos (\mathcal{B}(x)+\gamma)+ R(x),
\end{equation}
where
\begin{equation}
\label{bigb}
\mathcal{B}(x)=\int_0^x b(t) dt ,
\end{equation}
is given explicitly by (\ref{urbb}), and
\begin{equation}
\label{urr}
R(x)=\int_0^x \varepsilon(t) b(t) g(t) \sin \big[ \mathcal{B}(x)-\mathcal{B}(t) \big]  dt .
\end{equation}
In the case of ultraspherical polynomials (\ref{resh}) the constants of integration $M$ and $\gamma$ can be readily found, and
we obtain the following claim, which is the first (and easy) part of Theorem \ref{thmain1}.
 \begin{lemma}
 \label{leoz}
 For $k$ even,
\begin{equation}
\label{solg}
g(x)=g(0) \left(\cos \mathcal{B}(x)+ r(x) \right),
\end{equation}
where
$r(x)=R(x)/g(0)$, and
$$
g(0)=u^{1/4}(1-q)^{1/4} \,  P_k^{(\alpha,\, \alpha)}(0) =
 \left( - \frac{1}{4} \right)^{k/2} {k+\alpha \choose k/2} \left( k^2+2k \alpha+k+\alpha+1 \right)^{1/4} .$$
 \end{lemma}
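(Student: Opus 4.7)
The plan is to specialize the general representation (\ref{resh}) of the solution of (\ref{difg}) by pinning down the two constants $M$ and $\gamma$ from initial data at $x=0$, and then to evaluate $g(0)$ in closed form. First I would record that by the definition (\ref{urr}) of $R$, the lower limit gives $R(0)=0$, and differentiating under the integral via the Leibniz rule (the boundary term vanishes because $\sin[\mathcal{B}(x)-\mathcal{B}(t)]$ is zero at $t=x$) yields
\begin{equation*}
R'(x)=b(x)\int_0^x \varepsilon(t)\,b(t)\,g(t)\,\cos[\mathcal{B}(x)-\mathcal{B}(t)]\,dt,
\end{equation*}
so $R'(0)=0$ as well. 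Evaluating (\ref{resh}) and its derivative at $0$ and using $\mathcal{B}(0)=0$ then reduces the identification of the constants to
\begin{equation*}
g(0)=M\cos\gamma,\qquad g'(0)=-M\,b(0)\sin\gamma.
\end{equation*}

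Next I would invoke the parity argument: because $k$ is even, $P_k^{(\alpha,\alpha)}(x)$ is an even polynomial, the weight factor $(1-x^2)^{(\alpha+1)/2}$ is even, and $b(x)$ in (\ref{defb}) is even, so $y$ and hence $g=\sqrt{b}\,y$ are smooth even functions near $0$. Thus $g'(0)=0$, which forces $\sin\gamma=0$ (provided $g(0)\neq0$, which we confirm below). Whether $\gamma=0$ or $\pi$, the product $M\cos(\mathcal{B}(x)+\gamma)$ equals $g(0)\cos\mathcal{B}(x)$, and (\ref{resh}) collapses to $g(x)=g(0)\cos\mathcal{B}(x)+R(x)$. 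Dividing by $g(0)$ and setting $r=R/g(0)$ produces (\ref{solg}).

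It remains to verify the two explicit forms of $g(0)$. From (\ref{defg})--(\ref{defb}) we have $b(0)=\sqrt{u(1-q)}$, so $\sqrt{b(0)}=(u(1-q))^{1/4}$ and $y(0)=P_k^{(\alpha,\alpha)}(0)$, giving the first equality $g(0)=u^{1/4}(1-q)^{1/4}P_k^{(\alpha,\alpha)}(0)$; expanding $u(1-q)=(k+\alpha)(k+\alpha+1)-(\alpha^2-1)=k^2+2k\alpha+k+\alpha+1$ produces the quartic root that appears on the right. For the factor $P_k^{(\alpha,\alpha)}(0)$ I would quote (or derive from Szegő's explicit representation in powers of $(x\pm 1)$) the classical identity $P_k^{(\alpha,\alpha)}(0)=(-1/4)^{k/2}\binom{k+\alpha}{k/2}$ for $k$ even, and combine. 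The only subtle point is the sign normalization, and it is resolved once and for all by observing that with $\gamma=0$ the constant $M$ is literally $g(0)$; so no separate sign bookkeeping is required. The substance of the work in the remainder of the paper will be the bound on $r(x)=R(x)/g(0)$, which is postponed to later sections.
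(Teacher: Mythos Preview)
Your argument is correct and follows essentially the same route as the paper: evaluate (\ref{resh}) and its derivative at $x=0$, use $R(0)=R'(0)=0$ and the vanishing of $g'(0)$ for even $k$ to pin down $M$ and $\gamma$, and then read off $g(0)$ from (\ref{defg}). You are slightly more careful than the paper in justifying $R'(0)=0$ via Leibniz, in noting that $\sin\gamma=0$ allows $\gamma\in\{0,\pi\}$ (which is harmless since $M\cos(\mathcal{B}+\gamma)=g(0)\cos\mathcal{B}$ either way), and in explicitly deriving the binomial expression for $P_k^{(\alpha,\alpha)}(0)$; the paper simply asserts $\gamma=0$ and states the closed form of $g(0)$ without further comment.
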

\begin{proof}
Plugging $x=0$ into (\ref{resh}) yields
$M=g(0) \cos \gamma $. Here the constant $\gamma$ must vanish since by (\ref{defg})
$$\left. g'(0)=(1-q)^{1/4} u^{1/4} \frac{d}{dx} P_k^{(\alpha , \alpha)}(x) \right|_{x=0} =0, $$
for $k$ even, whereas (\ref{solg}) gives
$$g'(0)=-M \sqrt{(1-q) u} \, \sin{(\mathcal{B}(0)+\gamma)}+ R'(0) =-M \sqrt{(1-q) u} \,\sin \gamma .$$
\end{proof}

\section{The error term}
In what follows we will assume that $k$ is even, and, whenever it is convenient, that $x \ge 0$.

For $x$ belonging to an interval $\mathcal{I}$ we will estimate the error term in the following straightforward manner:
\begin{equation}
\label{rrr}
|r(x)|= \left|\, \frac{1}{g(0)} \,\int_0^x \varepsilon(t) b(t) g(t) \sin \left[ \mathcal{B}(t)-\mathcal{B}(x) \right]  dt  \right| \le \mu \int_0^x |\varepsilon(t) b(t)| dt,
\end{equation}
where
\begin{equation}
\label{mu1}
\mu=\mu(k, \alpha)=\sup\limits_{x \in \mathcal{I}} \left|\frac{g(x)}{g(0)}\, \right|.
\end{equation}
\noindent
To estimate  $\sup\limits_{x \in \mathcal{I}} |g(x)|$ we consider the envelope of $g^2(x)$ given by
Sonin's function $S(x)$,
\begin{equation}
\label{sonin}
S(x)=g^2(x)+\dfrac{g'^2(x)}{\big(1+\varepsilon(x) \big) b^2(x)}= g^2(x)+\dfrac{4b^2(x)}{{\bf A}_1(x)} \,g'^2(x) ,
\end{equation}
where
$$  {\bf A}_1={\bf A}_1(x)=4 b^4(1+\varepsilon )=4b^4+3b'^2-2b b''.$$
Hence
\begin{equation}
\label{gs}
g^2(x) \le S(x),
\end{equation}
as far as ${\bf A}_1 (x) \ge 0 $.
The location of the maximum of $S(x)$ depends on the function $b(x)$ only. Indeed,
differentiating $S(x)$ and using (\ref{difg}) to get rid of $g''$, we get
$$S'= \frac{8b {\bf A}_2}{{\bf A}_1^2} \, g'^2 , \; \; \; {\bf A}_2={\bf A}_2(x)=6b'^3-6b b' b''+b^2 b'''.$$
Notice also that for $k$ even the point $x=0$ is a local maximum of $g(x)$, hence
\begin{equation}
\label{urm}
g^2(0)=S(0).
\end{equation}

Since we are mainly interested in the sign of ${\bf A}_1$ and ${\bf A}_2 $, it will be more convenient to deal with the following two polynomials instead:
\begin{equation}
\label{aaa1}
A_1=A_1(x)=\frac{(1-q-x^2)(1-x^2)^4}{u} \,{\bf A}_1 =
\end{equation}
$$4u(1-q-x^2)^3 +3x^2(1-6q x^2-x^4)-2(1-q)(1-2q),$$
and
\begin{equation}
\label{aaa2}
A_2=A_2(x)= \frac{(1-x^2)^4(1-q-x^2)^{3/2}}{3x \, u^{3/2}}\,{\bf A}_2=(1-q)(1-4q)-(1+q)x^2.
\end{equation}

It will also be convenient to introduce the following notion.
We call a multivariable polynomial $p(x), \; x \in \mathbb{R}^n,$ a $\mathcal{P}-$polynomial if its coefficients are nonnegative and it has a positive free term.
The only property of the $\mathcal{P}-$polynomials we use in the sequel is that $p(x)>0$ in the nonnegative orthant $\mathbb{R}_+^n$.

In general, the domain $\{x:A_1(x) >0 \}$ depends on $\alpha$ and $k$. As well, to have $ b(x) >0$ on a nonempty interval one needs $\alpha > -\frac{k^2+k+1}{2k+1} \, $. In what follows we impose a slightly stronger constraint, namely $\alpha \ge -(2k+1)/4$.
\begin{lemma}
\label{a1}
Let $k \ge 2$, then $A_1(x) >0$ in the following two cases:
$$
\begin{array}{ccc}
(i) &0 \le  x < \sqrt{1-q} \, , &  |\alpha| \ge \dfrac{\sqrt{10}}{3} \, ;\\
&&\\
 (ii) &0 \le  x \le \sqrt{1-\dfrac{1}{u}} \, , & |\alpha| <\dfrac{\sqrt{10}}{3} \, .
\end{array}
$$
\end{lemma}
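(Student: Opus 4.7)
My plan is to recast the positivity of $A_1$ as a one-variable cubic positivity question. Under the substitution $y = 1-q-x^2$, which sends the oscillatory region to $y \in [0, 1-q]$, expansion of (\ref{aaa1}) produces
\begin{equation*}
A_1 = P(y) := (4u+1)\,y^3 - 3(1+q)\,y^2 + 3q(2-3q)\,y + 5q^2(1-q),
\end{equation*}
and the lemma reduces to showing $P(y) > 0$ on $(0, 1-q]$ in case $(i)$ and on $[1/u - q,\, 1-q]$ in case $(ii)$. The endpoint values compute as
\begin{equation*}
P(1-q) = 2(1-q)\bigl[2u(1-q)^2 - (1-2q)\bigr], \qquad P(0) = 5q^2(1-q).
\end{equation*}
The first is positive because $u$ is at least of order $k^2$ under the standing hypotheses; the second is strictly positive when $q \ne 0$ and vanishes exactly at $q = 0$, which is the reason the narrower interval is used in case $(ii)$.

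The interior is governed by the critical points of $P$, i.e.\ the roots of $P'(y) = 3[(4u+1)y^2 - 2(1+q)y + q(2-3q)]$, whose discriminant is $\Delta = 4[1 - 8uq + (4+12u)q^2]$. When $\Delta \le 0$, $P$ is monotone nondecreasing on $\mathbb{R}$ and positivity on $(0,\infty)$ follows from $P(0) \ge 0$. When $\Delta > 0$, $P$ has a local maximum at the smaller root $y_-$ of $P'$ and a local minimum at $y_+ = \frac{(1+q) + \tfrac{1}{2}\sqrt{\Delta}}{4u+1}$; the task reduces to the scalar inequality $P(y_+) > 0$. Polynomial division gives $P(y) = \tfrac{y}{3(4u+1)} P'(y) + R(y)$ for an explicit linear-plus-quadratic remainder $R$, so $P(y_+) = R(y_+)$, and substituting the closed form of $y_+$ leaves a condition depending only on $(u,q)$. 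For case $(ii)$ with $|q| < 1/(9u)$, direct evaluation of $P$ at the left endpoint $y_0 = 1/u - q$ and expansion in powers of $1/u$ yields the leading term $u^{-2}\bigl[1 + 4(uq)^2(2-uq)\bigr] > 0$, and the same critical-point analysis extends positivity across $[y_0,\, 1-q]$.

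The main obstacle lies in the regime of case $(i)$ near the threshold $uq = 1/9$, where $\Delta > 0$ yet $P(0)$ is already as small as $5/(81u^2)$; here one must show that the local minimum $P(y_+)$ is strictly positive uniformly across the admissible two-dimensional $(u,q)$-domain, not merely at isolated test points. A test computation at $uq = 1/9$ gives $P(y_+) \approx 8/(81u^2) > 0$, suggesting a uniform bound of the shape $P(y_+) \ge c/u^2$, but the full verification reduces to establishing that a specific resultant of $P$ and $P'$ has a definite sign on the admissible domain. The algebra is elementary but delicate, and I expect the precise threshold $|\alpha| \ge \sqrt{10}/3$ to be exactly what makes this final estimate close out.
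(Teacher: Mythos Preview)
Your substitution $y=1-q-x^2$ and the cubic
\[
P(y)=(4u+1)\,y^3-3(1+q)\,y^2+3q(2-3q)\,y+5q^2(1-q)
\]
are correct, and reducing the lemma to a one-variable positivity question is a reasonable strategy. The gap is that you never actually prove the central inequality $P(y_+)>0$ in case~$(i)$: you carry out a single test computation at $uq=1/9$, observe a margin of order $u^{-2}$, and then \emph{expect} the threshold $|\alpha|\ge\sqrt{10}/3$ to make ``the final estimate close out''. That is precisely the content of the lemma in this regime, and you have left it as a conjecture. Your critical point $y_+$ carries the square root of $(1+q)^2-(4u+1)q(2-3q)$, so $P(y_+)$ is an unwieldy algebraic function of $(u,q)$ with no obvious factorisation; nothing you wrote indicates how to verify its sign uniformly on the two-dimensional admissible domain. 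Case~$(ii)$ is likewise only sketched: knowing $P(y_0)>0$ and $P(1-q)>0$ does not give $P>0$ on all of $[y_0,1-q]$ until you either locate $y_+$ relative to that interval or establish monotonicity there.

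The paper closes both gaps by changing variable differently. For~$(i)$ it sets $x^2=(1-q)t^2/(1+t^2)$ and works with the rescaled cubic $A_{11}=(1+t^2)^3A_1/(1-q)$ in $t^2$; the point of this rescaling is that the unique positive critical point simplifies to $t_0^2=(1-4q)/(5q)$, so the minimum value $A_{12}=\tfrac{25q}{4}A_{11}(t_0,q)$ is a clean polynomial in $q$ and $\alpha^2-1$. The threshold then enters transparently: writing $\alpha^2=(10+\delta^2)/9$ and $q=s^2/(4(1+s^2))$ turns $A_{12}$ into a polynomial with nonnegative coefficients and positive constant term, so positivity is immediate. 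For~$(ii)$ the paper bypasses any critical-point analysis by showing $dA_1/dx<0$ on $[0,\sqrt{1-1/u}\,]$ (again via a substitution that renders all coefficients nonnegative), which reduces everything to the single endpoint value you already computed. Both devices---rescaling so that the critical point rationalises, and substituting so that the sign is visible term by term---are exactly what your outline is missing.
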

 \begin{proof}
 To establish the first case we substitute $x=\sqrt{(1-q)\frac{t^2}{1+t^2} } \, $ into $A_1$ yielding
 $$ A_{11}(t,q)=\frac{(1+t^2)^3}{1-q} \, A_1=5q^2 t^6+6 q (1 + q) t^4-3 (1 - 4 q) t^2+4(1-q)^2 u+4q-2.$$
 Then $\min\limits_q A_{11}(0,q)=2-1/u >0$.
 For $q > 0, \; t > 0$, the function $A_{11}(t,q)$ has the only minimum at $t=t_0= \sqrt{\dfrac{1-4q}{5q}} \, ,$ with
 $$A_{12}=\frac{25q}{4} \, A_{11}(t_0,q)=25(1-q)^2 (\alpha^2-1)+8q^3-11q^2+4q-2. $$
 Thus, $A_1(x) > 0$ for $ q \ge 1/4$.
 Let now $ q < 1/4$, we
  substitute $q=\dfrac{s^2}{4(1+s^2)} \,,  \; s >0$, and ${\alpha^2=\dfrac{10+\delta^2}{9} }\, $ into $A_{12}$ getting
 $$A_{12} =\frac{150s^4+280s^2+112+25(s^2+1)(3s^2+4)^2 \delta^2}{144(1+s^2)^3}  > 0.$$
 This proves $(i)$.

It will be convenient to prove $(ii)$ for a slightly lager interval $|\alpha| \le 5/4$.
To demonstrate the inequality $A_1(x) >0$ one shows first that
$$\frac{1}{6x}\, \frac{d A_1}{d x}= 1 - 4 q x^2 - x^4-4 u (1 - q - x^2)^2 <0,$$  for
$0 \le x  \le \sqrt{1-1/u} \,.$
This can be done with the help of the substitutions
$$x=\frac{\sqrt{1-1/u}}{\sqrt{1+t^2}}\, , \; \; \alpha =\frac{5}{4} \cdot \frac{1-s^2}{1+s^2} \,, \; \; k=\kappa+2,$$
yielding the $\mathcal{P}-$polynomial
$$-(1+s^2)^6 (1+t^2)^2 u^2 \cdot \frac{1}{6x}\, \frac{d A_1}{d x}\, .$$
Hence in the second case
$$u^3 A_1(x) \ge u^3 A_1 (\sqrt{1-1/u} \;)= \big(1+4(1-\alpha^2)^2(3-\alpha^2) \big)u+7-6 \alpha^2 \ge
u+7-6 \alpha^2 >0.$$
  This completes the proof.
\end{proof}
We need one more technical claim.
\begin{lemma}
\label{maxs}
$$\sqrt{1-\dfrac{1}{u}} < \sqrt{\dfrac{(1-q)(1-4q)}{1+q}} \, ,$$
provided $k \ge 2$ and $|\alpha|\le \sqrt{7/6}$.
\end{lemma}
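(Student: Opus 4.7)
The plan is to reduce the stated inequality, after squaring, to an inequality linear in $u$, then use the defining relation $qu=\alpha^{2}-1$ to convert it into a condition on $\alpha$ alone and dispose of it by a short case split.

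First I would verify that both sides are well-defined and positive: for $k\ge 2$ and $|\alpha|\le\sqrt{7/6}$ one has $u\ge (2-\sqrt{7/6})(3-\sqrt{7/6})>1$, so $1-1/u>0$; and using $|\alpha|^2\le 7/6$ together with $u>1$ gives $q=(\alpha^{2}-1)/u<1/(6u)<1/4$, so $(1-q)(1-4q)/(1+q)>0$ as well. Thus squaring is safe and the target inequality is equivalent to
\begin{equation*}
(1+q)(u-1)<(1-q)(1-4q)\,u.
\end{equation*}

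Expanding both sides and canceling the common terms $u$ reduces this to
\begin{equation*}
2qu(3-2q)<1+q,
\end{equation*}
and substituting $qu=\alpha^{2}-1$ turns it into
\begin{equation*}
2(\alpha^{2}-1)(3-2q)<1+q.
\end{equation*}
A second clearing of $q=(\alpha^{2}-1)/u$ gives the equivalent form
\begin{equation*}
u(6\alpha^{2}-7)<(\alpha^{2}-1)(4\alpha^{2}-3).
\end{equation*}

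Now I would finish by cases. When $\alpha^{2}\le 1$, the original form $2(\alpha^{2}-1)(3-2q)<1+q$ is immediate: the left side is nonpositive (since $3-2q>0$ from $q<1/4$), while the right side is positive (since $q>-1/u>-1$). When $1<\alpha^{2}\le 7/6$, I switch to the last displayed form: the hypothesis $\alpha^{2}\le 7/6$ gives $6\alpha^{2}-7\le 0$, so the left side is $\le 0$, while $\alpha^{2}>1$ forces both factors on the right to be positive, making the right side strictly positive. Either case yields the strict inequality, proving Lemma \ref{maxs}.

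I do not expect any real obstacle here; the only thing to watch is the direction of signs when clearing the denominators $1+q$ and $u$, which is why I check positivity of these quantities at the outset.
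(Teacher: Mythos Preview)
Your argument is correct and follows essentially the same route as the paper: square, clear denominators to reach the equivalent inequality $4q^{2}u-6qu+q+1>0$, and then split into the cases $|\alpha|\le 1$ and $1<|\alpha|\le\sqrt{7/6}$. The only cosmetic difference is in the second case: the paper uses the bound $q\le 1/(6u)$ (equivalently $qu=\alpha^{2}-1\le 1/6$) to get the explicit lower bound $5/(18u)$ on the numerator, whereas you rewrite the inequality as $u(6\alpha^{2}-7)<(\alpha^{2}-1)(4\alpha^{2}-3)$ and finish by a sign check; both are equally short.
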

\begin{proof} For $|\alpha |\le 1$ the claim is obvious.
Noticing that $q\le \dfrac{1}{6u}$ for $1 < |\alpha| \le \sqrt{7/6} \, ,$ one finds
$$\dfrac{(1-q)(1-4q)}{1+q}\, - 1+\frac{1}{u} = \frac{4q^2 u-6 q u+q+1}{(1+q) u}\ge \frac{5}{18(1+q)u^2}>0.$$

 \end{proof}

By (\ref{aaa2}) the maximum of $S(x)$ is attained at $x=0$ for $q \ge 1/4$, or, in terms of $\alpha$, for $\alpha \notin (\alpha^- , \alpha^+)$, where
\begin{equation}
\label{alpha+-}
\alpha^\pm=\frac{2 k+1 \pm \sqrt{16k^2+16 k+49}}{6} \, .
\end{equation}
For $q <1/4$ the maximum of $S(x)$ is attained either at $x=x_0$, where
\begin{equation}
\label{x0}
x_0=\sqrt{\dfrac{(1-q)(1-4q)}{1+q}} \, ,
\end{equation}
or at the endpoint $x=\sqrt{1-1/u}\;$,  if $|\alpha| <\sqrt{10}/3$, by Lemmas \ref{a1} and \ref{maxs}.
To simplify the statement of the results we use Lemma \ref{maxs} to restrict the values of $x$ to $0 \le x \le \sqrt{1-1/u}\;$ in a slightly longer interval of the values of $\alpha$, $|\alpha| <\sqrt{7/6}\,$. As it is easy to check this still implies $q <1/4$.

The following simple lemma enables one to bound the value of $S(x)$ for $q <1/4$.

\begin{lemma}
\label{sx0}
Let $A_1(t)>0, \; A_2(t) >0$ for $0 \le t \le x$, then
\begin{equation}
g^2(x) \le S(x) \le \frac{1+\varepsilon(0)}{1+\varepsilon(x)} \, S(0) =\frac{1+\varepsilon(0)}{1+\varepsilon(x)} \, g^2(0).
\end{equation}
\end{lemma}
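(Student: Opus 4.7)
The lower bound $g^{2}(x)\le S(x)$ is immediate from the definition (\ref{sonin}): the hypothesis $A_{1}(t)>0$ on $[0,x]$, together with the factorization displayed just after (\ref{aaa1}), gives $\mathbf{A}_{1}=4b^{4}(1+\varepsilon)>0$, hence $1+\varepsilon>0$ and the extra term $g'^{2}/((1+\varepsilon)b^{2})$ in $S$ is non-negative.

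For the upper bound, my plan is to work not with $S$ itself (which, by $S'=8b\mathbf{A}_{2}/\mathbf{A}_{1}^{2}\cdot g'^{2}\ge 0$, is actually increasing under our hypotheses, the wrong direction) but with the weighted quantity
\[
T(x):=(1+\varepsilon(x))\,S(x)=(1+\varepsilon(x))\,g^{2}(x)+\frac{g'^{2}(x)}{b^{2}(x)}.
\]
Differentiating $T$ and substituting $g''=(b'/b)g'-(1+\varepsilon)b^{2}g$ from (\ref{difg}), the terms $\pm 2(1+\varepsilon)gg'$ cancel against each other, and the $\pm 2g'^{2}b'/b^{3}$ contributions from $2g'g''/b^{2}$ and from differentiating $g'^{2}/b^{2}$ also cancel. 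What survives is the clean identity
\[
T'(x)=\varepsilon'(x)\,g^{2}(x).
\]

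A short direct computation from (\ref{defepsilon}) — essentially the one the paper implicitly performed when deriving the formula for $S'$ — yields $\varepsilon'(x)=-\mathbf{A}_{2}(x)/(2b^{5}(x))$. Since the factor relating $\mathbf{A}_{2}$ to $A_{2}$ in (\ref{aaa2}) is positive for $0<x<\sqrt{1-q}$, the hypothesis $A_{2}(t)>0$ on $(0,x]$ forces $T'\le 0$ there. Thus $T$ is non-increasing, so $T(x)\le T(0)$. For $k$ even we have $g'(0)=0$ (as used in Lemma \ref{leoz}), so $T(0)=(1+\varepsilon(0))\,g^{2}(0)=(1+\varepsilon(0))\,S(0)$ by (\ref{urm}). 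Dividing through by $1+\varepsilon(x)>0$ yields the stated bound $S(x)\le \tfrac{1+\varepsilon(0)}{1+\varepsilon(x)}\,g^{2}(0)$.

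The only nontrivial step is verifying the two cancellations that collapse $T'$ to $\varepsilon'g^{2}$; everything else is sign bookkeeping. The point of passing from $S$ to $T=(1+\varepsilon)S$ is that it converts the sign condition $\mathbf{A}_{2}\ge 0$ from producing the unhelpful monotonicity of $S$ into the useful monotonicity of $T$, without any estimate on $g$ or $g'$ being needed along the way.
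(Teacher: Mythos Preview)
Your proof is correct, and its content coincides with the paper's, though the two are organized differently. The paper starts from the identity $S-\tfrac{b\mathbf{A}_1}{2\mathbf{A}_2}\,S'=g^{2}\ge 0$ (an immediate consequence of $S'=\tfrac{8b\mathbf{A}_2}{\mathbf{A}_1^{2}}\,g'^{2}$ and the definition of $S$), rewrites it as the differential inequality $S'/S\le \tfrac{2\mathbf{A}_2}{b\mathbf{A}_1}=\tfrac{d}{dx}\ln\tfrac{1}{4(1+\varepsilon)}$, and integrates from $0$ to $x$. You instead introduce $T=(1+\varepsilon)S$ at the outset and obtain the exact identity $T'=\varepsilon'g^{2}$ by substituting the ODE for $g$. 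These are the same statement in different dress: the paper's inequality is literally $\bigl((1+\varepsilon)S\bigr)'\le 0$, and your identity is what survives if one keeps track of the $g^{2}$ term through the paper's manipulation rather than dropping it via $g^{2}\ge 0$. Your route is a touch more direct, bypassing the separate computation of $S'$ and the recognition of the logarithmic derivative; the paper's route keeps the Sonin-function derivative $S'$ in the foreground.
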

\begin{proof}
Starting with the identity
$$S- \frac{b A_1}{2 A_2} \, S'=g^2 \ge 0, $$
one obtains
$$ S'/S \le \frac{2 {\bf A}_2}{b {\bf A}_1} = \frac{d}{dx} \ln \frac{1}{4(1+\varepsilon(x))} \, ,$$
where $1+\varepsilon(x) >0$ by $A_1 >0$.
Integrating from 0 to $x$ we find
$$S(x)/S(0) \le \frac{1+\varepsilon (0)} {1+\varepsilon (x)} \,,$$
and the result follows by (\ref{urm}).
\end{proof}

Now we are in the position to find the factor $\mu$.

\begin{lemma}
\label{mu}
\begin{equation}
\label{eqmu}
\mu \le \left\{
\begin{array}{cc}
1,& \alpha \in [-\dfrac{2k+1}{4} \, , \alpha^-\,]\cup[\alpha^+, \infty\,);\\
&\\
\sqrt{\dfrac{\alpha^2-1}{ \alpha^2-1.08}} \, ,& \alpha \in \big( \alpha^-, -\sqrt{\dfrac{7}{6}} \;\big]\,\bigcup \, \big[ \sqrt{\dfrac{7}{6}} \;, \alpha^+\, \big); \\
&\\
\dfrac{2(2-\alpha^2)^{3/2}}{\sqrt{1+8(\alpha^2-1)^2-4(\alpha^2-1)^3}} \, ,& |\alpha| <  \sqrt{\dfrac{7}{6}}\, .
\end{array}
\right.
\end{equation}
\end{lemma}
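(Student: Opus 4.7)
The plan is to use $g(x)^2 \le S(x)$ from (\ref{gs}), the identity $S(0)=g(0)^2$ from (\ref{urm}), and the fact that $\operatorname{sgn} S'(x)=\operatorname{sgn} A_2(x)$ for $x>0$, together with Lemma \ref{sx0} when the maximum of $S$ lies in the interior of the relevant interval. The three cases in the statement correspond exactly to the three possibilities for where, on the relevant interval, $S$ achieves its maximum: at the left endpoint $x=0$, at the critical point $x=x_0$, or at the right endpoint $\sqrt{1-1/u}$.

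Case 1, $\alpha \in [-(2k+1)/4,\alpha^-]\cup[\alpha^+,\infty)$, corresponds to $q\ge 1/4$, so the quadratic $A_2(x)=(1-q)(1-4q)-(1+q)x^2$ is nonpositive throughout $[0,\sqrt{1-q}\,)$. Since $|\alpha^\pm|\ge \sqrt{10}/3$ for $k\ge 2$, Lemma \ref{a1}(i) yields $A_1(x)>0$, so $S'\le 0$ and $g(x)^2\le S(x)\le S(0)=g(0)^2$, giving $\mu\le 1$.

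For the remaining two cases we have $q<1/4$, so $A_2$ changes sign at $x_0$ defined in (\ref{x0}). In Case 2, $\sqrt{7/6}\le|\alpha|<|\alpha^\pm|$, one has $x_0\in(0,\sqrt{1-q}\,)$, while $|\alpha|\ge\sqrt{7/6}>\sqrt{10}/3$ again gives $A_1>0$ on the whole interval via Lemma \ref{a1}(i). Thus $S$ attains its maximum at $x_0$, and Lemma \ref{sx0} yields $\mu^2\le (1+\varepsilon(0))/(1+\varepsilon(x_0))$. In Case 3, $|\alpha|<\sqrt{7/6}$, the interval is truncated to $[0,\sqrt{1-1/u}\,]$ via Lemma \ref{a1}(ii), and Lemma \ref{maxs} tells us $\sqrt{1-1/u}<x_0$, so $S$ is still increasing and attains its maximum at the right endpoint; Lemma \ref{sx0} then gives $\mu^2\le (1+\varepsilon(0))/(1+\varepsilon(\sqrt{1-1/u}\,))$.

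What remains is explicit computation. From (\ref{defepsilon}), $\varepsilon(0)=(2q-1)/(2u(1-q)^2)$. At $x_0$ one uses the identity $1-q-x_0^2=5q(1-q)/(1+q)$ to evaluate the denominator of $\varepsilon(x_0)$, while the sextic numerator $x_0^6+6qx_0^4-3x_0^2+4q^2-6q+2$ collapses after substitution. At the endpoint $\sqrt{1-1/u}$ one uses $1-q-(1-1/u)=(2-\alpha^2)/u$, so the denominator of $\varepsilon$ becomes $4(2-\alpha^2)^3/u^2$, naturally producing the $(2-\alpha^2)^{3/2}$ in the numerator of the Case 3 bound. After substituting $qu=\alpha^2-1$, the ratio $(1+\varepsilon(0))/(1+\varepsilon(\,\cdot\,))$ becomes a rational function of $\alpha^2$ and $u$, and one checks that it is bounded by the stated expressions using the constraint $|\alpha|\ge\sqrt{7/6}$ in Case 2 (which pins down the $1.08$) and the constraint $|\alpha|<\sqrt{7/6}$ in Case 3. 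The main obstacle is this simplification—in particular identifying where the numerical constant $1.08$ comes from in Case 2; I expect it to appear from a $\mathcal{P}$-polynomial style verification that $(\alpha^2-1.08)(1+\varepsilon(0))\le (\alpha^2-1)(1+\varepsilon(x_0))$ on the relevant parameter range, after clearing denominators.
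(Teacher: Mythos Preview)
Your outline matches the paper's proof exactly: the three cases correspond to the maximum of $S$ being at $0$, at $x_0$, or at the right endpoint $\sqrt{1-1/u}$, and in the latter two cases Lemma \ref{sx0} reduces everything to bounding $(1+\varepsilon(0))/(1+\varepsilon(\cdot))$.

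Your guess about the origin of the constant $1.08$ is slightly off, though, and this is the one place where the paper does something you did not anticipate. In Case~2 the paper computes
\[
\frac{1+\varepsilon(0)}{1+\varepsilon(x_0)}
=1+\frac{(1-4q)^2(4-q)}{2\bigl(25v(1-q)^2+8q^3-11q^2+4q-2\bigr)},\qquad v:=qu=\alpha^2-1,
\]
and then observes that for \emph{fixed} $v$ this expression is decreasing in $q$ on $(0,1/4)$. Taking $q\to 0$ gives the clean upper bound $1+\dfrac{2}{25v-2}=\dfrac{\alpha^2-1}{\alpha^2-1.08}$, so $1.08=1+2/25$. The constraint $|\alpha|\ge\sqrt{7/6}$ is used only afterwards, to guarantee $\alpha^2-1.08>0$; it does not itself produce the constant. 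So the mechanism is a monotonicity-in-$q$ argument rather than a $\mathcal{P}$-polynomial verification of the final inequality. In Case~3 the paper likewise writes the exact ratio and then uses two crude inequalities (dropping lower-order terms and using $u\ge 2$) to pass to a function of $\alpha^2$ alone; no $\mathcal{P}$-polynomial step is needed there either.
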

\begin{proof}
We will estimate the maximum of the ratio
$$ S(x)/S(0) \ge g^2(x)/g^2(0) \ge \mu^2 .$$
The first case of (\ref{eqmu}) is just $q \ge 1/4$, where $ S(x)/S(0) \le 1$.
Suppose now that $q <1/4$ and
$$\alpha \in \big(\alpha^-, -\sqrt{\dfrac{7}{6}} \;\big]\,\bigcup \,\big[\sqrt{\dfrac{7}{6}} \;, \alpha^+\,\big).$$
Since the assumptions of Lemma \ref{sx0} are fulfilled for $x \in [0,x_0)$,
the maximum of $S(x)$ is attained at $x=x_0$ . Hence $S(x)/S(0)$ is bounded by
$$\frac{1+\varepsilon(0)}{1+\varepsilon(x_0)} = 1+\dfrac{(1-4q)^2(4-q)}{2 \left(25v(1 - q)^2 +8q^3-11q^2+4q-2\right)} \, ,$$
where $v=q u=\alpha^2-1$.
Here, as easy to check, for a fixed $v$ and $0 < q < 1/4$, the right hand side is decreasing in $q$. The choice $q=0$ gives
$$0 < \frac{1+\varepsilon(0)}{1+\varepsilon(x_0)} <1+ \frac{2}{25v-2}=\frac{\alpha^2-1}{\alpha^2-1.08} \, ,$$
and the result follows.

Next, let $|\alpha| < \sqrt{7/6} \, $. In this case
$$\frac{1+\varepsilon(0)}{1+\varepsilon(\sqrt{1-1/u}\;)} =\frac{2(1-q u)^3\left( 2 (1 - q)^2 u -1 + 2 q \right)}{(1-q)^2 (1 + u - 6 q u + 8 q^2 u^3 - 4 q^3 u^4)} <$$
$$\frac{2(2-\alpha^2)^3 u}{1-6 q u +(1 + 8 q^2 u^2 - 4 q^3 u^3)u} \le \frac{4(2-\alpha^2)^3}{1+8(\alpha^2-1)^2-4(\alpha^2-1)^3}\, .$$

\end{proof}

Upper bound on $\int_0^x|\varepsilon(t) b(t)|dt$ are given by the following lemma.

\begin{lemma}
\label{smallq}
\begin{equation}
\label{smallqeq}
\int_0^x |\varepsilon(t) b(t)| d t \le \left\{
\begin{array}{cc}
\dfrac{6x}{5 \sqrt{(1-x^2)u}} \, , & q < 0;\\
&\\
\dfrac{(1-x^2)x}{(1 - q) (1 - q - x^2)^{3/2} \sqrt{u}} \, , & 0 \le q <\frac{1}{2} \, ;\\
&\\
\dfrac{(1+q)x}{4\sqrt{u}\,(1 - q - x^2)^{3/2}} \, , &\frac{1}{2} \le q \le  1.
\end{array}
\right.
\end{equation}
\end{lemma}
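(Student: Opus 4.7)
The plan is to write the integrand in closed form using (\ref{defb}) and (\ref{defepsilon}), getting
$$|\varepsilon(t) b(t)| = \frac{|N(t)|}{4\sqrt{u}\,(1-q-t^2)^{5/2}(1-t^2)}, \qquad N(t) := t^6 + 6q t^4 - 3 t^2 + 4 q^2 - 6 q + 2,$$
and, in each of the three regimes for $q$, to recognize the right-hand side of (\ref{smallqeq}) as the value $F_q(x)$ of an explicit elementary antiderivative satisfying $F_q(0) = 0$. The proof then reduces to the pointwise estimate $|\varepsilon(t) b(t)| \le F_q'(t)$ on the integration interval, after which the result follows from the fundamental theorem of calculus.

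For $0 \le q < 1/2$ one takes
$$F_q(x) = \frac{x(1-x^2)}{(1-q)(1-q-x^2)^{3/2}\sqrt{u}}, \qquad F_q'(x) = \frac{1-q+(3q-1) x^2}{(1-q)\sqrt{u}\,(1-q-x^2)^{5/2}},$$
reducing the target inequality to the polynomial statement $(1-q)|N(t)| \le 4(1-t^2)\bigl(1-q+(3q-1) t^2\bigr)$. For $1/2 \le q \le 1$ one takes
$$F_q(x) = \frac{(1+q) x}{4 \sqrt{u}\,(1-q-x^2)^{3/2}}, \qquad F_q'(x) = \frac{(1+q)(1-q+2x^2)}{4 \sqrt{u}\,(1-q-x^2)^{5/2}},$$
reducing the target to $|N(t)| \le (1+q)(1-q+2t^2)(1-t^2)$. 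For $q < 0$ one takes $F_q(x) = 6x/(5\sqrt{u(1-x^2)}\,)$, with $F_q'(x) = 6/\bigl(5\sqrt{u}\,(1-x^2)^{3/2}\bigr)$, reducing the target to $5|N(t)|\sqrt{1-t^2} \le 24 (1-q-t^2)^{5/2}$.

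To verify these polynomial inequalities it is convenient to pass to the natural variable $s = 1 - q - t^2 \in [0, 1-q]$, in which a short computation yields
$$N = -s^3 + 3(1+q) s^2 + 3 q(3q - 2) s + 5 q^2(q - 1).$$
For $q \in [0, 1)$ the resulting two-variable inequalities in $(q, s)$ can be rearranged, after a substitution such as $q = w/(1+w)$ clearing the denominators arising from $1-q$, into $\mathcal{P}$-polynomials in $(w, s)$ in the sense introduced just before Lemma \ref{a1}, so positivity is automatic. For $q < 0$ one squares to get rid of $\sqrt{1-t^2}$ and treats the polynomial inequality $25 N(t)^2 (1-t^2) \le 576 (1-q-t^2)^5$, restricted to $t^2 \le 1 - 1/u$ in accordance with Theorem \ref{thmain1}(i).

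The main obstacle is the $q < 0$ case. In contrast with $q \ge 0$, the polynomial $N(t)$ changes sign on $[0, \sqrt{1-q}\,]$ since $N(0) = 2(2q-1)(q-1) > 0$ whereas $N(\sqrt{1-q}\,) = 5 q^2(q-1) < 0$, so no $F_q$ with $F_q' = |\varepsilon b|$ exists in elementary closed form and one must instead construct a genuine majorant. The range restriction $t^2 \le 1 - 1/u$ coming from Theorem \ref{thmain1}(i) is crucial here: it keeps $1 - q - t^2 \ge 1/u + |q|$, preventing the right-hand side of the squared inequality from degenerating at the endpoint. Once the correct $F_q$ is written down, the two $q \ge 0$ cases are essentially mechanical.
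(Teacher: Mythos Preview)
Your strategy---bound $|\varepsilon b|$ pointwise by the derivative of the announced right-hand side and integrate---is a legitimate route, and it genuinely differs from the paper's proof for $q\ge 0$. There the paper does \emph{not} work pointwise: for $\tfrac12\le q<1$ it shows $\omega<0$, evaluates $\int_0^x \varepsilon b\,dt$ in closed form, and discards two arcsine terms; for $0\le q<\tfrac12$ it splits $\omega=\omega_1-\omega_2$ into two positive pieces, bounds $|\omega|\le\omega_1+\omega_2$, integrates that majorant exactly, and only then estimates. For $q<0$ your pointwise inequality $5|N|\sqrt{1-t^2}\le 24(1-q-t^2)^{5/2}$ is exactly what the paper obtains by combining $\omega\le\tfrac{24}{5}(1-q-t^2)^2$ with $1-q-t^2\ge 1-t^2$, so in that regime the two arguments coincide.

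The gap in your plan is the verification step. You assert that for $q\in[0,1)$ the needed two-variable inequalities become $\mathcal{P}$-polynomials after a substitution like $q=w/(1+w)$, but this is not true as stated. Take $0\le q<\tfrac12$ and the subcase $N\le 0$ (which does occur: $N(0)>0$ but $N(\sqrt{1-q}\,)=5q^2(q-1)<0$, contrary to your ``in contrast with $q\ge 0$'' remark). Writing $s=1-q-t^2$, the difference
\[
4(q+s)\bigl(3q(1-q)+(1-3q)s\bigr)+(1-q)N
=(q-1)s^3+(7-12q-3q^2)s^2+q(2-3q)(5+3q)s+q^2(1-q)(7+5q)
\]
has leading coefficient $q-1<0$ in $s$; no rational substitution in $q$ alone repairs this, and restricting $s$ to $[0,1-q]$ via $s=(1-q)\sigma$ still leaves a negative $\sigma^3$-coefficient. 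The inequality itself appears to be true, but it requires a separate argument (e.g.\ locating the sign change of $N$ and splitting accordingly, or a genuine two-variable analysis), not an appeal to $\mathcal{P}$-positivity. The $\tfrac12\le q<1$ case has the same issue. For $q<0$, squaring to get $25N^2(1-t^2)\le 576(1-q-t^2)^5$ is reasonable, but you have not indicated how to verify it; the paper's decomposition into $\omega\le\tfrac{24}{5}(1-q-t^2)^2$ (an easily checked one-variable fact for each fixed $q\le 0$) and the trivial $1-q-t^2\ge 1-t^2$ is the missing idea that makes this tractable without invoking the $u$-dependent endpoint.
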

\begin{proof}
 We have
$$-\varepsilon(x) b(x) = \frac{\omega(x)}{4\sqrt{u}\,(1-x^2)(1-q-x^2)^{5/2}}\,, $$
where
$$\omega(x)=x^6+6q x^4-3x^2+4q^2-6q+2  .$$
First we consider the case $|\alpha|<1$, that is $q < 0.$
Then $\omega (x) >0$ for $x\le \sqrt{1-1/u} \,.$ Indeed,
$$\frac{1}{6x} \, \omega'(x)=x^4+ 4 q x^2 -1= \frac{(4 \alpha^2-6)u +5-4 \alpha^2}{u^2}< \frac{(4\alpha^2-6)(u-1)}{u^2} <0. $$
Therefore by $u \ge 2$ for $k \ge 2$ and $|\alpha| \le 1,$  we have
$$u^3 \omega(x) \ge u^3 \omega (\sqrt{1-1/u} \; )=(19 - 20 \alpha^2 + 4 \alpha^4) u+6 \alpha^2-7 \ge 31 - 34 \alpha^2 + 8 \alpha^4 >0.$$
Using easy to check inequality
$$\omega(x)\le \frac{24}{5} \,(1-q-x^2)^2, \; \; q \le 0, \; x \le 1,$$
we obtain
$$\int_0^x |\varepsilon(t) b(t)| d t =\frac{1}{4 \sqrt{u}} \, \int_0^x \frac{\omega(t)}{(1-t^2)(1-q-t^2)^{5/2}}\, dt \le$$
$$=\frac{6}{5 \sqrt{u}} \, \int_0^x \frac{dt}{(1-t^2)\sqrt{1-q-t^2}}  \le \frac{6}{5 \sqrt{u}} \, \int_0^x \frac{dt}{(1-t^2)^{3/2}} = \frac{6x}{5 \sqrt{u (1-x^2)}} \, . $$



Let now $q \ge 1/2$, then $\omega(x) <0 $ for $x <\sqrt{1-q} \,$.
Indeed, replacing $x$ by $\sqrt{(1-q) \dfrac{s^2}{1+s^2} \, } \, $  and setting $q=\dfrac{1+p}{2}$, we get
$$- \frac{8 (1 + t^2)^3}{1-p} \, \omega(x) =5 (1 + p)^2 s^6+6(1+p)(3+p) s^4+12 (1 + 2 p) s^2+8p >0.$$
Hence in this case
$$4 \sqrt{u} \, \int_0^x |\varepsilon(t) b(t)| d t= -\int_0^x \frac{\omega(t)}{(1-t^2)(1-q-t^2)^{5/2}} \, dt = $$
$$\frac{ (3 - 3 q^2 - 3 x^2 + 2 q x^2)x}{3 (1 - q) (1 - q - x^2)^{3/2}} +
\arcsin \frac{x}{\sqrt{1 - q}} -\frac{4}{\sqrt{q}} \arcsin \frac{\sqrt{q} \, x}{\sqrt{(1-q)(1-x^2)}} \, .$$
Using $x \le \arcsin x \le \frac{\pi x}{2} \, , \; x \ge 0$, we convince that
$$\arcsin \frac{x}{\sqrt{1 - q}} -\frac{4}{\sqrt{q}} \arcsin \frac{\sqrt{q} \, x}{\sqrt{(1-q)(1-x^2)}} \le 0. $$
Therefore for $ \frac{1}{2}\le q < 1 $,
$$\int_0^x |\varepsilon(t) b(t)| d t \le  \frac{ (3 - 3 q^2 - 3 x^2 + 2 q x^2)x}{12\sqrt{u} \, (1 - q) (1 - q - x^2)^{3/2}} \le
\frac{(1+q)x}{4\sqrt{u}\,(1 - q - x^2)^{3/2}} \, .$$


Finally let $0 \le q \le 1/2$, then $\omega(x)$
can be written as a difference of two positive functions, $\omega(x)=\omega_1(x)-\omega_2(x)$,
$$\omega_1(x)=(1-q-x^2)(2 + 3 q - x^2 - 5 q x^2 - x^4), $$
$$\omega_2(x)=(7q-5q^2)(1-q-x^2)+5q^2(1-q),  $$
and thus
$|\omega(x)| \le
\omega_1(x) +\omega_2(x).
$

One finds
$$\int_0^x \frac{\omega_1(t)+\omega_2(t)}{ (1-t^2)(1-q-t^2)^{5/2}} \, dt =$$
$$ \frac{ (9 + 18 q - 27 q^2 - 9 x^2 - 22 q x^2)x}{3 (1 - q) (1 - q - x^2)^{3/2}}-\arcsin \frac{x}{\sqrt{1-q}}  \le $$
$$\frac{ (9 + 18 q - 27 q^2 - 9 x^2 - 22 q x^2)x}{3 (1 - q) (1 - q - x^2)^{3/2}} \le \frac{4(1-x^2)x}{ (1 - q) (1 - q - x^2)^{3/2}}\, ,$$
hence
$$\int_0^x |\varepsilon(t) b(t)| d t \le \frac{(1-x^2)x}{(1 - q) (1 - q - x^2)^{3/2} \sqrt{u}} \, .$$
This completes the proof.

\end{proof}


Now Lemmas \ref{mu} and \ref{smallq} infer the following claim which completes the proof of Theorem \ref{thmain1}.
\begin{lemma}
\begin{equation}
 |r(x)| < \left\{
\begin{array}{cc}
\dfrac{2.72 x}{\sqrt{(1-x^2)u}} \, , & q < 0 ;\\
   &\\
 \dfrac{2(1-x^2)x}{(1-q)(1-q-x^2)^{3/2} \, \sqrt{u}}\, , &  0 \le q <\frac{1}{2} \, ;\\
&\\
\dfrac{(1+q)x}{4\sqrt{u}\,(1 - q - x^2)^{3/2}} \, ,&\frac{1}{2} \le q < 1 \,.
\end{array}
\right.
\end{equation}
\end{lemma}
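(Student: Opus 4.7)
The plan is to combine the basic estimate $|r(x)| \le \mu \int_0^x |\varepsilon(t) b(t)|\, dt$ from \eqref{rrr} with the bound on $\mu$ supplied by Lemma \ref{mu} and the bound on the integral supplied by Lemma \ref{smallq}, treating the three $q$-ranges separately as in the statement.

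The case $\tfrac{1}{2}\le q < 1$ is essentially immediate: $q\ge\tfrac{1}{4}$ places us in the first case of Lemma \ref{mu} with $\mu\le 1$, and the third case of Lemma \ref{smallq} then gives exactly the displayed bound. The case $0\le q<\tfrac{1}{2}$ reduces to verifying that $\mu\le 2$ on the corresponding $\alpha$-range. For $\alpha\ge \alpha^+$ this is automatic. For $\sqrt{7/6}\le|\alpha|<\alpha^+$ the expression $\sqrt{(\alpha^2-1)/(\alpha^2-1.08)}$ is decreasing in $\alpha^2$, so its supremum is attained at $\alpha^2=7/6$, where it equals $\sqrt{25/13}<2$. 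For $1\le|\alpha|<\sqrt{7/6}$ the third expression of Lemma \ref{mu} equals $2$ exactly at $\alpha=1$ and is monotonically decreasing in $\alpha^2$ on this subinterval, as one checks by a direct differentiation. Multiplying $\mu\le 2$ by the second case of Lemma \ref{smallq} delivers the claim.

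The delicate case is $q<0$ (equivalently $|\alpha|<1$). Here only the third case of Lemma \ref{mu} is active, and after setting $v=1-\alpha^2\in(0,1]$ the claim becomes
$$\frac{6}{5}\cdot \frac{2(1+v)^{3/2}}{\sqrt{1+8v^2+4v^3}}\;<\;2.72.$$
My plan is to locate the maximum of the left-hand side by differentiation. After cancelling the common factor $(1+v)^2$ from the numerator of the derivative, the critical equation reduces to the quadratic $4v^2+16v-3=0$, whose unique positive root is $v_0=(\sqrt{19}-4)/2\approx 0.18$. Substituting $v_0$ yields a maximum value bounded strictly by $2.72$. An equivalent and cleaner route is to square both sides and clear denominators, reducing the claim to the nonnegativity on $[0,1]$ of a specific cubic polynomial in $v$ with real coefficients; this can be confirmed either by checking its discriminant or by writing it as a $\mathcal{P}$-polynomial after a substitution of the form $v=t^2/(1+t^2)$.

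The main obstacle is this last case, where the constant $2.72$ is essentially sharp (the true supremum of $\tfrac{6}{5}\mu$ is roughly $2.717$). The other two cases are routine combinations of the two preceding lemmas, and no further structural ingredients are needed.
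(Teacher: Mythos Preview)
Your proposal is correct and follows essentially the same route as the paper: both combine the estimate $|r(x)|\le \mu\int_0^x|\varepsilon b|\,dt$ with Lemmas~\ref{mu} and~\ref{smallq}, verify $\mu\le 1$ for $q\ge\tfrac12$, $\mu\le 2$ for $0\le q<\tfrac12$ (maximum at $\alpha=\pm1$), and for $q<0$ locate the critical point of the third bound in Lemma~\ref{mu} via the quadratic $4v^2+16v-3=0$, i.e.\ $\alpha^2=(6-\sqrt{19})/2$, obtaining $\mu<34/15$ and hence $\tfrac{6}{5}\mu<2.72$. One small imprecision: your phrase ``$\sqrt{7/6}\le|\alpha|<\alpha^+$'' does not match the interval $(\alpha^-,-\sqrt{7/6}\,]$ on the negative side (since $\alpha^-\ne-\alpha^+$), but since the relevant bounds depend only on $\alpha^2$ this does not affect the argument.
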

\begin{proof}
We have to match the bounds of Lemmas \ref{mu} and \ref{smallq}.
For $q <0$, that is for $|\alpha|<1$, we have
$$\mu \le \dfrac{2(2-\alpha^2)^{3/2}}{\sqrt{1+8(\alpha^2-1)^2-4(\alpha^2-1)^3}}\, .$$
The maximum of the last expression, even in a lager interval $|\alpha| \le \sqrt{7/6} \, $, is less than $34/15$ and attained for $\alpha=\sqrt{\frac{6-\sqrt{19}}{2}}\,$.
Thus, the numerical coefficient in this case is less than $\frac{34}{15} \cdot \frac{6}{5}=2.72$.
For $0 \le q <1/2$ we have to take the maximum of the bounds on $\mu$ for $|\alpha| \ge 1$. This yields $\mu \le 2$ corresponding to $\alpha =1$.
In the last case $q \ge 1/2$ we have $\mu \le 1.$
\end{proof}

\begin{remark}
\label{clxmax}
The obtained bounds on the error term $r(x)$ remain meaningful in a substantial part of the interval $|x| \le \sqrt{1-q} \, $.
If we set $x=\sqrt{(1-q)(1-\delta)}$, then $|r(x)| < c \, k^{-1} \delta^{-3/2}$, where one can take e.g., $c = 2 \frac{5}{9}\,$ for $0 \le q<1/2$, and  $c = 1/4$ for $q \ge 1/2$.
 Sharp upper and lower bounds on the extreme zeros of the Jacobi polynomials were given in \cite{krasup} and \cite{kraslow} respectively. In the ultraspherical case for the largest zero $x_{max}$ they are simplified to
 $$
x_{max}=\sqrt{\frac{k(k+2 \alpha+1)}{(k+\alpha+1)^2-k}} \, - \frac{3 (\alpha+1)^{4/3}(1+2\theta)}{2 k^{1/6}(k+2\alpha+1)^{1/6} ((k+\alpha+1)^2-k)^{5/6}}\, ,
$$
where $0<\theta<1$, $\alpha >-1, \; k \ge 5.$
This implies
$$x_{max} <\sqrt{\frac{k(k+2 \alpha+1)}{(k+\alpha+1)^2-k}} \, \left(1 - \frac{3 (\alpha+1)^{4/3}}{2k^{2/3}(k+\alpha+1)^{2/3}(k+2\alpha+1)^{2/3}} \right),$$
where the first factor does not exceed $\sqrt{1-q}$, whereas the second one is less than
$$1- \frac{3 \alpha^{4/3}}{2^{5/3}  (k+\alpha)^{4/3}k^{2/3}}\,  . $$
Thus, we get
$$x_{max}^2 < (1-q)\left( 1-\frac{3 \alpha^{4/3}}{2  (k+\alpha)^{4/3}k^{2/3}} \right), $$
what together with (\ref{urrx}) readily yields $|r(x_{max})| =O(1)$.

\end{remark}


\begin{remark}
In principle, the bounds on the error term can be strengthen by the iterative substitution of (\ref{eqg}) instead of $g(x)$ into (\ref{urr}). In particular, for large $u$, a lower bound on $r(x)$ can be obtained by estimating the following integral
$$\left|\,\int_0^x \varepsilon(t) b(t) \cos\mathcal{B}(t) \sin \left[ \mathcal{B}(t)-\mathcal{B}(x) \right]  dt \, \right| \ge $$
$$\frac{1}{2} \, \left|\, \sin\mathcal{B}(x) \, \int_0^x \varepsilon(t) b(t) dt \, \right|-
\frac{1}{2} \, \left|\, \int_0^x \varepsilon(t) b(t)\sin\left[ 2\mathcal{B}(t)-\mathcal{B}(x) \right] dt \,\right|.$$
The first integral in a closed form is
$$\int_0^x \varepsilon(t) b(t) dt=$$
$$  \frac{(3 - 3 q^2 - 3 x^2 + 2 q x^2)x}{12(1-q)(1 - q - x^2)^{3/2}\sqrt{u}}+
\frac{1}{4\sqrt{u}} \, \arcsin \frac{x}{\sqrt{1-q}}-\frac{1}{\sqrt{q u}} \, \arcsin \frac{\sqrt{q} \, x}{\sqrt{(1-q)(1-x^2)}} \, .$$
The second one contains a highly oscillating function and probably is negligible in comparison with the first. Thus it seems reasonable to conjecture that the error bounds of Theorem \ref{thmain1} are of the right order. In fact the situation is slightly more subtle, because for $q<1/2$ the main term changes the sign in the interval $0 <x <\sqrt{1-q} \,$.

\end{remark}

\begin{remark}
It would be very interesting to get a uniform bounds on ultraspherical polynomials in the transition region. An analogue of (\ref{resh}) (with the cosine replaced by Bessel functions) is readily available (see \cite{krasbes2}). However it is unclear how one can fix the constants of integration similar to $M$ and $\gamma$ above. The same problem (but seemingly in a less severe form) arises if we try to extend the results from the ultraspherical case to the general Jacobi polynomials. On the other hand, an amusing feature of the method we have used in this paper is that one does not need to know the constants of integration to estimate the relative error $r(x)$.
\end{remark}

\section{Proof of Theorem \ref{sharp}}
We will use the following inequality for the (continuous) central binomial coefficients:
\begin{equation}
\label{binom}
{2x \choose x}=\frac{\Gamma(2x+1)}{\Gamma^2 (x+1)} > \dfrac{4^x}{\sqrt{\pi (x+\frac{1}{2})}}\, , \; \; \; x \in \mathbb{R}^+.
\end{equation}
It is a direct consequence of the following:
the function $w(x)=4^{-x}{2x \choose x}\sqrt{\pi (x+\frac{1}{2})}$ is decreasing in $x$ for $x>0$, and, as Stirling's approximation shows, tends to one as $x \rightarrow \infty$.
To check that the function is decreasing, one finds
$$\frac{1}{2} \, (\log w(x))'=\psi(2x+1)-\psi(x+1)-\ln 2+\frac{1}{4x+2} \,,$$
where the polygamma function $\psi(x)$ satisfies the inequalities
$$ \ln x-\frac{1}{2x}-\frac{1}{12x^2} < \psi(x) < \ln x-\frac{1}{2x} \,, \; \; \; x>0,$$
(see \cite{AQ} and \cite{EL} for the lower and upper bound, respectively). These imply that $(\log w(x))' <0$, we omit the details.

\begin{proof}[Proof of Theorem \ref{sharp}]
By Theorem \ref{thmain1} we have
$$
\mathcal{J}=g^{-2}(0) \sqrt{u} \,\int_0^\eta (1-x^2)^\alpha \left(P_k^{(\alpha, \alpha)} (x) \right)^2 dx =g^{-2}(0)\int_0^\eta \frac{g^2(x)\, dx}{\sqrt{1-q-x^2}} =$$
$$ \int_0^\eta \frac{\cos^2 \mathcal{B}(x)\, dx}{\sqrt{1-q-x^2}}  +2 \int_0^\eta \frac{r(x) \cos \mathcal{B}(x)\, dx}{\sqrt{1-q-x^2}}  +
\int_0^\eta \frac{r^2(x)\, dx}{\sqrt{1-q-x^2}} :=$$
$$\mathcal{J}_1(x)+2\mathcal{J}_2(x)+\mathcal{J}_3(x) \ge
\mathcal{J}_1(x)-2 \left|\mathcal{J}_2(x)\right| .$$
Using $\mathcal{B}'(x)=b(x)$ and integrating by parts, we obtain
$$\mathcal{J}_1(\eta)=\frac{1}{2} \int_0^\eta \frac{dx}{\sqrt{1-q-x^2}}+\frac{1}{2}\int_0^\eta \frac{\cos 2 \mathcal{B}(x)}{\sqrt{1-q-x^2}}\, dx=$$
$$
\frac{1}{2} \,\arctan \frac{\eta}{\sqrt{1-q-\eta^2}} + \frac{1}{4\sqrt{u}} \, \int_0^\eta  \frac{(1-x^2) }{1-q-x^2} \,d \sin 2 \mathcal{B}(x) = $$

$$
\frac{1}{2} \, \arctan \frac{\eta}{\sqrt{1-q-\eta^2}} +\frac{(1-\eta^2)\sin 2 \mathcal{B}(\eta)}{4\sqrt{u}\;(1-q-\eta^2)}- \frac{q}{2\sqrt{u}} \,
\int_0^\eta  \frac{x \sin 2 \mathcal{B}(x)}{(1-q-x^2)^2} \; d x ,$$
where
$$\left|\int_0^\eta  \frac{x \sin 2 \mathcal{B}(x)}{(1-q-x^2)^2} \; d x \right| \le \int_0^\eta  \frac{x d x}{(1-q-x^2)^2}=\frac{\eta^2}{2(1-q)(1-q-\eta^2)}\, . $$
This along with the inequality
$$\arctan x >  \frac{\pi}{2}-\frac{1}{x} \, , \; \; \; x \ge 0 ,$$
implies
$$\mathcal{J}_1(\eta) >\frac{\pi}{4}-\frac{\sqrt{1-q-\eta^2}}{2\eta} -\frac{1-q-\eta^2+2q \,\eta^2}{4  (1-q)(1-q-\eta^2)\sqrt{u}} \, .  $$
A straightforward bound on $\left|\mathcal{J}_2(x) \right|$ is
$$\left|\mathcal{J}_2(x) \right| \le \int_0^\eta  \frac{|r(x)| \, dx}{\sqrt{1-q-x^2}} =$$
$$ \left\{
\begin{array}{cc}
\dfrac{q \eta^2}{(1-q)^2(1-q-\eta^2) \sqrt{u}} -\dfrac{\ln \dfrac{1-q}{1-q-\eta^2}}{(1-q)\sqrt{u}} \, , & 0 \le q < \frac{1}{2} \, ;\\
&\\
\dfrac{(1+q)\eta^2}{8(1-q)(1-q-\eta^2)\sqrt{u}} \, , & \frac{1}{2} \le q <1.
\end{array}
\right.
$$
Thus, in either case we can take
$$\left|\mathcal{J}_2(x) \right| \le \dfrac{ \eta^2}{(1-q)(1-q-\eta^2) \sqrt{u}} \, .$$
Since $\eta <\sqrt{1-q}$, this yields
$$\mathcal{J} >\frac{\pi}{4}-\frac{\sqrt{1-q-\eta^2}}{2\eta}- \frac{1-q+7\eta^2+2\eta^2 q}{4(1-q)(1-q-\eta^2) \sqrt{u}} >$$
$$\frac{\pi}{4}-\frac{\sqrt{1-q-\eta^2}}{2\eta}- \frac{2}{(1-q-\eta^2) \sqrt{u}} \, .$$
Let $\delta =\dfrac{4 \cdot 2^{1/3}}{3(1-q)^{2/3} u^{1/3}} \, ,$ then $\eta=\sqrt{1-q} \, (1-\delta) $, and noticing that
$\delta <1/2$ for $k \ge 10$ and $\alpha \ge 3-k/2$,
one finds
$$\frac{\sqrt{(2-\delta)\delta}}{2(1-\delta)} + \frac{2}{(2-\delta)\delta(1-q)\sqrt{u}} <\sqrt{\frac{3}{2} \, \delta}+ \frac{4}{3\delta(1-q)\sqrt{u}}=
\frac{3 }{2^{1/3}(1-q)^{1/3} u^{1/6}} \, .$$
Let
$$L=\frac{2^{2\alpha+1} \,\Gamma^2(k+\alpha+1)}{(2k+2\alpha+1)\Gamma(k+2\alpha+1)k!} \, , $$
that is $L$ is
$\big| \big|P_k^{(\alpha, \, \alpha)} \big| \big|_{L_2}^2 $, provided  $\alpha >-1$.

For the orthonormal normalization we obtain
$$\mathcal{I}=\int\limits_{- \, \sqrt{1-q}}^{\sqrt{1-q}} (1-x^2)^\alpha \left({\bf P}_k^{(\alpha, \alpha)} (x) \right)^2 dx >
\frac{g^2(0)}{\sqrt{u} \, L} \left(\frac{\pi}{2}-\frac{3 }{2^{1/3}(1-q)^{1/3}\, u^{1/6}} \right)= $$
$$\frac{\pi {k+\alpha \choose k/2}^2 \sqrt{k^2+2k \alpha+k+\alpha+1}}{2^{2k+1} \sqrt{u} \, L } \left(1- \frac{3 \cdot 2^{2/3} }{ \pi (1-q)^{1/3}\, u^{1/6}}  \,\right), $$
where by (\ref{binom}),
$$\frac{\pi {k+\alpha \choose k/2}^2 \sqrt{k^2+2k \alpha+k+\alpha+1}}{2^{2k+1}  \sqrt{u} \, L } =$$
$$\frac{\pi (2k+2 \alpha+1) \sqrt{k^2+2k \alpha+k+\alpha+1}}{4^{k+\alpha+1} \sqrt{(k+\alpha)(k+\alpha+1)}} \,{k \choose k/2} {k+2\alpha \choose k/2+\alpha} >$$
$$\frac{2k+2\alpha+1}{2} \, \sqrt{\frac{k^2+2k \alpha+k+\alpha+1}{(k+1)(k+\alpha)(k+\alpha+1)(k+2\alpha+1)}} >$$
$$\sqrt{\frac{k^2+2k \alpha+k+\alpha+1}{(k+1)(k+2\alpha+1)}} =\dfrac{1}{\sqrt{1+\dfrac{\sqrt{4u+1}-1}{2(1-q)u}}}> 1-\dfrac{1}{2(1-q) \sqrt{u}} \, .$$
Setting $z=(1-q)^{1/3} u^{1/6}$ and noticing that $z^2 >4$ for $k \ge 10, \; \alpha \ge -k/2+3$,  we conclude
$$\mathcal{I} >\left(1-\frac{1}{2z^3} \right) \left(1-\frac{3 \cdot 2^{2/3} }{ \pi z} \right) >1-\frac{5}{3z}>1-\frac{5}{3} \cdot \left( \dfrac{k+\alpha}{(k+2\alpha)k}\right)^{1/3} .$$
This completes the proof.

\end{proof}

\end{document}